%
%
%
%
%
%
\RequirePackage{fix-cm}
\documentclass[smallextended]{svjour3}       
\smartqed  
%
%
%
%
%
%

\usepackage{
amsmath,
amssymb,
hyperref,
mathrsfs,
graphicx,
stmaryrd,
amscd,
 amsfonts, stmaryrd,latexsym, xargs}
\usepackage{color}

\usepackage[colorinlistoftodos]{todonotes}
 \presetkeys{todonotes}%
{inline,backgroundcolor=gray!20,bordercolor=gray!30}{}
\tikzset{/tikz/notestyleraw/.append style={text=black}}

\newcommand\todoin[2][]{\todo[inline, caption={2do}, #1]{
 \begin{minipage}{\textwidth-3pt}#2\end{minipage}
 }}

\newtheorem{thm}{Theorem}[section]

\newtheorem{lem}[thm]{Lemma}
\newtheorem{defn}[thm]{Definition}
\newtheorem{que}{Question}

\newtheorem{prop}[thm]{Proposition}
\newtheorem{cor}[thm]{Corollary}
\newtheorem{rmk}[thm]{Remark}
\newcommand{\be}{\begin{eqnarray}}
\newcommand{\ee}{\end{eqnarray}}
\newcommand{\ben}{\begin{eqnarray*}}
\newcommand{\een}{\end{eqnarray*}}

\newcommand{\beal}{\begin{aligned}}
\newcommand{\enal}{\end{aligned}}
\newcommand{\bc}{\begin{cases}}
\newcommand{\ec}{\end{cases}}

\newcommand{\Z}{\mathbb{Z}}

\newcommand{\R}{\mathbb{R}}

\newcommand{\Q}{\mathbb{Q}}
\newcommand{\A}{\mathbb{A}}

\newcommand{\N}{\mathbb{N}}

\newcommand{\cB}{\mathcal{B}}
\newcommand{\cC}{\mathcal{C}}

\newcommand{\cS}{\mathcal{S}}
\newcommand{\cR}{\mathcal{R}}

\newcommand{\cI}{\mathcal{I}}

\newcommand{\cT}{\mathcal{T}}
\newcommand{\cA}{\mathcal{A}}

\newcommand{\cM}{\mathcal{M}}
\newcommand{\cL}{\mathcal{L}}

\newcommand{\Om}{\Omega}
\newcommand{\eps}{\epsilon}

\newcommand{\Ol}{\overline}

\newcommand{\om}{\omega}

\newcommand{\dt}{\delta}

\newcommand{\T}{\mathbb{T}}

\newcommand{\baru}{\bar{u}}
\newcommand{\barv}{\bar{v}}

\begin{document}

\title{Spectral invariants of convex billiard maps: 
}
\subtitle{a viewpoint of Mather's $\beta-$function }


\author{Jianlu Zhang         
}


\institute{
              Hua Loo-Keng Key Laboratory of Mathematics \& Mathematics Institute, Academy of Mathematics and systems science, Chinese Academy of Sciences, Beijing 100190, China \\
              Tel.: +86-182-1038-3625\\
              \email{jellychung1987@gmail.com}           
}

\date{Received: date / Accepted: date}

\maketitle

\begin{abstract}
For strictly convex billiard maps of smooth boundaries, we get a Birkhoff normal form via a list of constructive generating functions. Based on this, we get an explicit formula for the $\beta-$function (locally), and explored the relation between the spectral invariants of the billiard maps and the $\beta-$function.
\keywords{convex billiard map, generating functions, Aubry Mather theory, $\beta-$function, spectral invariants}
 \subclass{37E40,37E45,37J40,37J50}
\end{abstract}

\section{Introduction}\label{s1}

The billiard map describes the frictionless motion of a massless particle inside a smoothly bounded region $\Om\subset\R^2$ with elastic reflections at the boundary $\partial\Om$. When the particle hits the boundary, it  conforms to the law of optical reflection: the angle of reflection equals the angle of incidence, see Fig. \ref{fig1}. When the boundary $\partial\Om$ is strictly convex, the motion of the particle can be totally interpreted as trajectories of a {\sf symplectic twist maps}, see \cite{M}. So we can convert the billiard geometric problems to dynamic problems of twist maps. Especially, as the introducing of variational approaches by Mather in 1980s, the prospect of this topic get totally refreshed and a lot of inspiring conclusions were revealed \cite{ADK,HKS,KS,Ma,Z,Z2}.\\

Although the dynamic essence of convex billiard maps is comparably straightforward, its qualitative properties are extremely nonlocal. Precisely, the geometric state of $\partial\Om$ will decide several intriguing rigidity phenomena, which form the basis of the famous {\sf Marked Length Spectrum Conjecture} and {\sf Birkhoff Conjecture}. We will devote the whole paper to show how these conjectures relates with the Mather's $\beta-$function.

\subsection{Dynamics of convex billiard maps}\label{s1.1}

Suppose that $\partial\Om=\xi(s)$ is parametrized by {\sf arc length} $s$ and the perimeter equals $1$. Let $v$ be the {\sf incident angle} starting from $s$, then the billiard map can be identified by a diffeomorphism on the annulus $\mathbb A = [0, 1] \times [0,\pi]$ (see Fig. \ref{fig1}):
\be\label{eq:b-m}
\phi:(s,v)\rightarrow(\Ol s,\Ol v),
\ee
which can be formalized by the following {\sf generating function} in the universal covering space
\be\label{eq:gene-0}
h(x,\overline x):=-\big\|\xi([x])-\xi([\Ol x])\big\|, \quad\quad \forall(x,\Ol x)\in\R^2
\ee
with $x\equiv [x] (\text{mod }1)$ and $\Ol x\equiv [\Ol x] (\text{mod }1)$.
Indeed, we have 
\be\label{p-h}
\partial_1 h=\cos v,\quad \partial_2h=-\cos \Ol v.
\ee
Notice that $-\partial_{12}h>0$ once the boundary is strictly convex \cite{M2}, and the diffeomorphism $\phi$ preserves the area form $\om=\sin vdv\wedge dx$, which is {\sf exact} since $d\alpha=\om$ for $\alpha=-\cos vdx$.
\begin{rmk}
Here the exactness of $\om$ is necessary, to guarantee the {\sf marked length spectrum} is a symplectic invariant of the map $\phi$ (see Sec. \ref{s1.2} for definition). Moreover, the exactness should be kept during the iterations of Birkhoff transformations, such that the marked length spectrum still applies for the final Birkhoff normal form.
\end{rmk}

\begin{figure}
\begin{center}
\includegraphics[width=6cm]{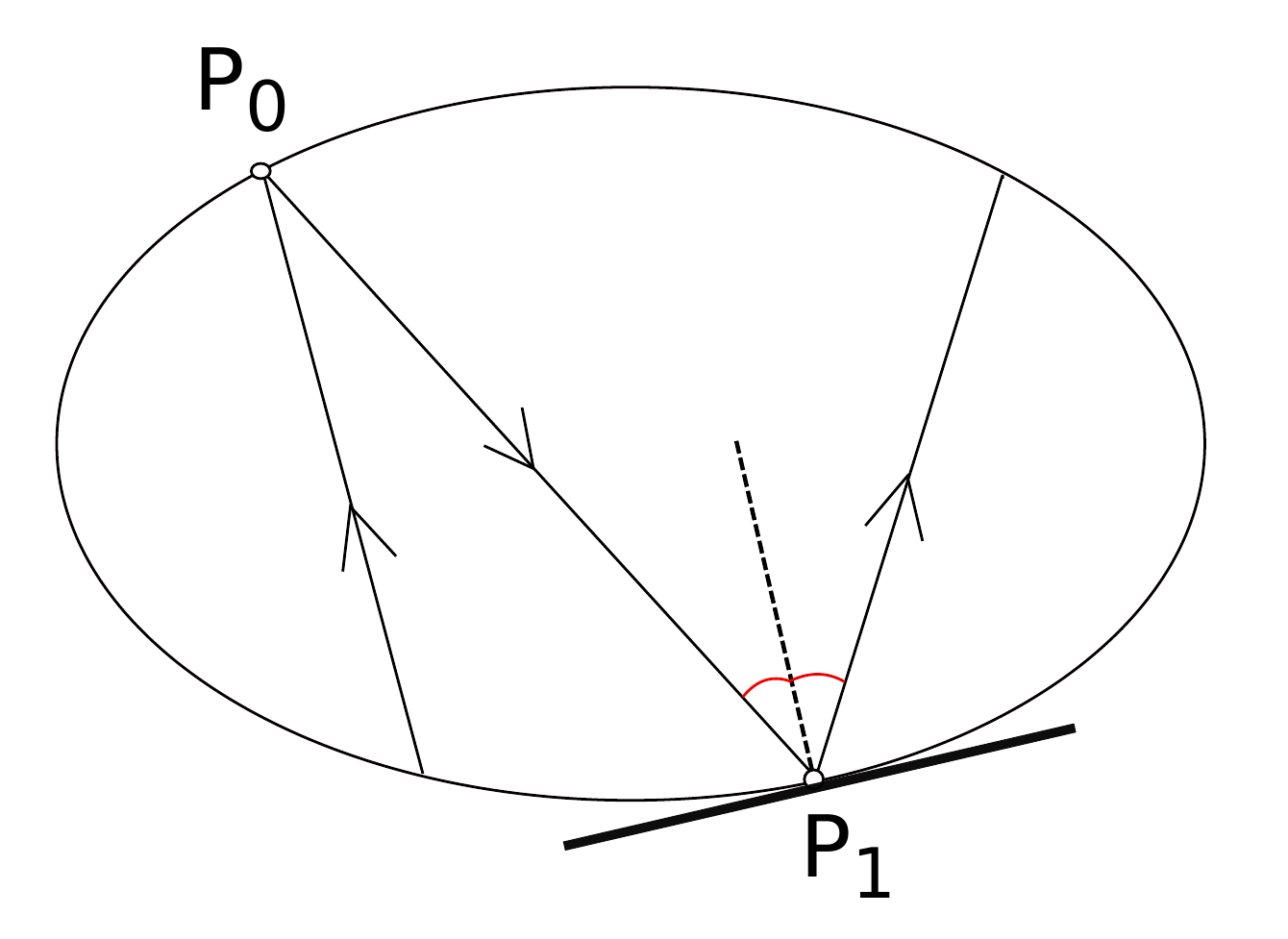}
\caption{The reflective angle keeps equal to the incident angle for every rebound.}
\label{fig1}
\end{center}
\end{figure}

\begin{lem}[{\sf Reversibility}]
The billiard map (\ref{eq:b-m}) is reversible, in the sense that $\phi\circ \cR\circ\phi=\cR$ for the following defined {\sf reflective diffeomorphism}:
\be\label{eq:sym-1}
\cR:(s,v)\in\A\rightarrow (s,\pi-v)\in\A.
\ee
\end{lem}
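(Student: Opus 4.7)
The plan is to deduce the reversibility directly from the symmetry of the generating function $h$ defined in \eqref{eq:gene-0}. Since $\|\xi([x])-\xi([\bar x])\|$ is manifestly symmetric in its two arguments, we have $h(x,\bar x)=h(\bar x,x)$, and differentiating this identity gives
\[
\partial_1 h(x,\bar x)=\partial_2 h(\bar x,x),\qquad \partial_2 h(x,\bar x)=\partial_1 h(\bar x,x).
\]

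Now suppose $\phi(s,v)=(\bar s,\bar v)$. By the defining relations \eqref{p-h}, $\partial_1 h(s,\bar s)=\cos v$ and $\partial_2 h(s,\bar s)=-\cos\bar v$. The symmetry above then yields $\partial_1 h(\bar s,s)=-\cos\bar v=\cos(\pi-\bar v)$ and $\partial_2 h(\bar s,s)=\cos v=-\cos(\pi-v)$, which are precisely the generating-function equations characterizing $\phi(\bar s,\pi-\bar v)=(s,\pi-v)$. Since the twist condition $-\partial_{12}h>0$ mentioned in the excerpt ensures that these implicit equations pin down $\phi$ uniquely, I conclude that $\phi(\bar s,\pi-\bar v)=(s,\pi-v)$. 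Rewriting this in terms of $\cR$, we get
\[
\phi\circ\cR\circ\phi(s,v)=\phi(\bar s,\pi-\bar v)=(s,\pi-v)=\cR(s,v),
\]
which is the claimed identity.

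There is really no serious obstacle: the statement is the analytic counterpart of the physical intuition that reversing a billiard trajectory's velocity at any point causes it to retrace its past, and the law of reflection is time-symmetric. The only place where a little care is needed is in translating $\cos(\pi-\cdot)=-\cos(\cdot)$ correctly and in invoking the twist condition to guarantee uniqueness of the preimage in the implicit-function step; both are routine. An alternative, purely geometric proof is available by noting that $(s,v)$ and $(\bar s,\pi-\bar v)$ determine the same oriented chord of $\partial\Om$ traversed in opposite directions, but the generating-function argument has the virtue of being a short algebraic verification that will also be compatible with the generating-function manipulations used later in the paper.
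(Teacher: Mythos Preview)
Your argument is correct. The symmetry $h(x,\bar x)=h(\bar x,x)$ together with the defining relations \eqref{p-h} indeed yields $\phi(\bar s,\pi-\bar v)=(s,\pi-v)$, and your invocation of the twist condition $-\partial_{12}h>0$ to guarantee uniqueness in the implicit step is the right justification.

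As for comparison: the paper does not supply a proof of this lemma at all; it is stated as a standard fact and immediately followed by a remark. Your generating-function verification is therefore not competing with any argument in the paper but rather filling in a gap the author left implicit. The approach you chose is particularly well suited to the paper's later use of generating functions in the Birkhoff iteration, and the geometric alternative you mention (reversing the chord) would be equally valid but less aligned with the algebraic machinery that follows.
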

\begin{rmk}
The reversibility forces the $\beta-$function of the billiard map to be of a special form. During the interation of Birkhoff transformations it is also preserved.
\end{rmk}

\subsection{Periodic orbits and length spectrum}\label{s1.2}

Finding periodic orbits and explore their properties is naturally the foremost dynamic
feature we concern.As an example, Birkhoff's application on {\sf Poincare's Last Geometric Theorem} proves the existence of at least two period$-n$ orbits for each $n\geq 2$ \cite{B}. However, the amount of periodic orbits in $\A$ is much bigger than that. Here we propose a variational approach to explain why. Let's introduce a notion {\sf rotation number} first: 
\begin{defn}
Assume the positive orientation of $\partial\Om$ be the direct $s$ raises, then the {\sf winding number} of a periodic billiard orbit $\gamma$ is defined by the number of how many times $\gamma$ goes around $\partial\Om$ in the positive direction until it returns to the same initial position. The {\sf rotation number} is defined and denoted by 
\be\label{eq:rot-num}
\rho(\gamma):=\frac{\text{winding number of $\gamma$}}{\text{period of $\gamma$}}\in(0,\frac12].
\ee
The reason we restrict the rotation number within $(0,1/2]$ is because $\cR(\gamma)$ has a rotation number $1-\rho(\gamma)$ due to the reversibility and vice versa.
\end{defn}

Now for any $p/q\in\Q$ in lowest terms, we can define the following {\sf barrier function}
\be
\cB(s,p/q):=\min_{X\in\cC(s,p/q)}\sum_{i=0}^{q-1} h(x_i,x_{i+1}),\quad s\in[0,1)
\ee
where $\cC(s,p/q)$ is the space of all $p/q-$configurations $\{x_i\}_{i=0}^{q-1}$ with $x_0\equiv s$. The minimizer and maximizer of $\cB(s,p/q)$ will individually decides a $p/q-$periodic orbit (Thm 1.2.4 of \cite{Si}).
\begin{defn}
For any convex billiard domain $\Om$, the {\sf marked length spectrum} is defined by 
\be
\cM\cL(\Om):=\{\min_{s\in[0,1]}\cB(s,p/q)|p/q\in\Q\cap(0,1/2]\}.
\ee
\end{defn}
\begin{rmk}\label{rmk:sym-inv}
The significance of the marked length spectrum is the {\sf symplectic invariance}. Recall that 
\[
\phi^*\alpha-\alpha=d h(x,\Ol x),
\]
for any exact symplectic transformation $\Phi:\A\rightarrow\A$, the following commutative diagram
\[
\begin{CD}
(x,v)@>\phi>>(\Ol x,\Ol v)\\
@AA\Phi A @ AA\Phi A\\
(\theta,I)@>\psi>>(\Ol\theta,\Ol I),
\end{CD} 
\]
 is achievable and the billiard map $\phi$ will be transformed into $\psi$. If $\Phi$ has an associated generating function $g(x,\theta)$, i.e.
\[
dg=\beta-\alpha
\]
where $\beta$ is a new $1-$form on $\A$ with $d\beta=\om$, then $\psi$ has to be generated by the following 
\be
\hbar(\theta,\Ol\theta)=h(x,\Ol x)+g(x,\theta)-g(\Ol x,\Ol \theta).
\ee
Therefore, for any periodic orbit $X=\{x_i\}_{i=0}^{q-1}$, $\Phi$ will send it into a new periodic orbit $\Theta=\{\theta_i\}_{i=0}^{q-1}$ but the {\sf length function} 
\be
A(\Theta)&:=&\sum_{i=0}^{q-1}\hbar(\theta_i,\theta_{i+1})\nonumber\\
&=&\sum_{i=0}^{q-1}h(x_i,x_{i+1})+\sum_{i=0}^{q-1}g(x_i,\theta_{i})-\sum_{i=0}^{q-1}g(x_{i+1},\theta_{i+1})\nonumber\\
&=&\sum_{i=0}^{q-1}h(x_i,x_{i+1}):=A(X)
\ee
keeps the same.
\end{rmk}

Previous analysis shows that for any two different billiard domains $\Om$ and $\Om'$, if the dynamics of them are exact symplectic diffeomorphic to each other, they have the same marked length spectrum; Conversely, it's natural to raise the following question
\begin{que}[{\sf Marked Length Spectrum Conjecture \cite{GM}}]\label{que:mls-conj}
For any two different convex billiard domains $\Om$ and $\Om'$ which are isospectral in the sense of $\cM\cL(\Om)=\cM\cL(\Om')$, is it true that $\partial\Om$ is isometric to $\partial\Om'$ ?
\end{que}

\begin{rmk}
In \cite{GM} the authors also discussed the relations between the length spectrum and the eigenvalues of such a Dirichlet problem:
\be\label{eq:dirichlet}
 \left\{
 \begin{array}{cccccccccc}
 \Delta u&=&\lambda u&\quad\text{in\;} \Om\subset\R^2,\\
 u|_{\partial\Om}&=&0.& \quad\quad
 \end{array}
 \right.
\ee
They showed that we can recover the marked length spectrum from the information of those eigenvalues, and the meaning of latter can be traced to a famous problem: {\tt Can one hear the shape of a drum?} This problem is formulated by Mark Kac in \cite{K} (actually had been earlier stated by Hermann Weyl), which aims to establish the boundary $\partial\Om$ purely from the eigenvalues (so called {\sf Laplace spectrum}). Although we have gotten both negative answers in \cite{Mi} and positive ones in \cite{HZ,S}, this problem is still far from being completely solved.
\end{rmk}
 
\subsection{Integrability of billiards}\label{s1.3}

 We have different versions of `integrability' for the convex billiards, depending on the occasion we mention it. Dynamically, the {\sf Liouville Integrability} imposes the whole phase space of the map $\phi$ is foliated by non-contractive invariant curves ({\sf caustics}), and Bialy has proved circular billiard is the only Liouville integrable one \cite{Bi}. If we loose the global foliation to local, e.g. a positive measure set, we get a weaker integrability and obviously elliptic billiard is one. 
 \begin{que}[{\sf Birkhoff Conjecture}]\label{que:b-conj}
 There is no other local integrable billiard except for the elliptic one.
 \end{que}
 Recently, the deformative version of this Conjecture gets a great breakthrough, in a series of works by Kaloshin and his collaborators \cite{ADK,HKS,KS}. Moreover, we have chance to shrink the `local condition' to just 2 periodic caustics, $1/2$ and $1/3$ ones. Partial results has been achieved in \cite{Z2}. Nonetheless, the thorough resolution of this Conjecture is still far to reach. We will explain how the Mather theory support us on this Conjecture and exhibit some novel aspects of this topic in this paper.

\subsection{Mather's theory on convex biliards} \label{s1.4}

The original purpose of the Mather theory, is to explore the minimizing configurations of twist maps in an asymptotic viewpoint, then classify them by different homology (or cohomology) classes \cite{Ma2}. Based on this, we can find a list of invariant sets with different rotation numbers, and construct the heteroclinic (resp. homoclinic) orbits among them. Since in the current paper we concern only global geometric problems of billiard maps, of which individual trajectory conformation is less interested. So we will only display some necessary notions in this subsection.\\

A bi-infinite configuration $X=\{x_i\in\R\}_{-\infty}^{+\infty}$ is called {\sf minimal configuration}, if any finite segment of $X$ is minimal by fixing the two ending points. For any minimal configuration $X$, we can verify the {\sf Euler-Lagrange equation}:
\be
\partial_1 h(x_i,x_{i+1})+\partial_2 h(x_{i-1},x_i)=0,\quad\forall i\in\Z,
\ee
which implies $\{([x_i],-\partial_1h(x_i,x_{i+1}))\}_{-\infty}^{+\infty}$ is an orbit of the billiard map $\phi$. Moreover, there exists a uniquely identified rotation number for it, which can be formalized by 
\be
\rho(X):=\lim_{i\rightarrow\pm\infty}\frac{x_i}i.
\ee
Notice that this definition matches (\ref{eq:rot-num}) for periodic minimal configurations.

\begin{defn}[$\beta-$function]
For any $p/q\in\Q\cap(0,1/2]$ in lowest terms, the associated $\beta-$value is defined by 
\[
\beta(p/q):=\frac1 q\min_{s\in[0,1]}\cB(s,p/q).
\]
For any irrational number $\om\in(0,1/2]$, it's $\beta-$value can be defined in a limit way, i.e.
\[
\beta(\om):=\lim_{n\rightarrow+\infty}\beta(p_n/q_n)
\]
for any rational sequence $\{p_n/q_n\}_{n\in\N}$ approaching $\om$.
\end{defn}
\begin{rmk}
Any ${np}/{nq}-$minimizing configuration $X$ has to be $p/q-$minimizing and vice versa. This is because the minimizing periodic orbit has no self-intersection \cite{M2}. This guarantees $\beta(p/q)$ is well defined; Besides, as $p_n/q_n\rightarrow \om$, the associated minimal configuration $X_n$ will converge to a $\om-$minimal configuration $X_\om$, which accordingly ensures the well-definiteness of $\beta(\om)$.
\end{rmk}
\begin{prop}\cite{Ma3} \label{prop:beta}
Here we display several useful properties of $\beta:(0,1/2]\rightarrow\R$:
\begin{enumerate}
\item $\beta(h)$ is strictly convex for $h\in (0,1/2]$;
\item $\beta(h)$ is differentiable at $h\in\Q\cap(0,1/2]$, if and only if there exists a $h-$caustic for the billiard map. Accordingly, for generic billiard maps, $\beta(h)$ is not differentiable at all $h\in\Q\cap (0,1/2]$;
\end{enumerate}
\end{prop}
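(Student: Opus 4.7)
The plan is to deduce both items from the variational structure of the minimal configurations of the twist map generated by $h$, with Aubry's crossing lemma doing most of the work.

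For strict convexity in item (1), I would first establish mere convexity by a concatenation argument. Given $\omega_1 = p_1/q_1 < \omega_2 = p_2/q_2$ and any rational $\omega = t\omega_1 + (1-t)\omega_2$ with $t \in [0,1]$, interleaving suitably many copies of minimizing periodic configurations $X^{(1)}, X^{(2)}$ yields a competitor of rotation number $\omega$, and dividing its action by the period gives $\beta(\omega) \leq t\beta(\omega_1) + (1-t)\beta(\omega_2)$; continuity in $\omega$ extends this to all reals. To upgrade to strict convexity I would argue by contradiction: a flat piece of $\beta$ on some nontrivial interval $[\omega_-, \omega_+]$ would force all minimizers of rotation numbers in this interval to realize the same supporting line, hence to belong to a common Aubry class. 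The lemma that two distinct minimal configurations cross at most once, combined with the twist inequality $-\partial_{12}h > 0$, then forces these minimizers to be cyclically ordered with a fixed mean spacing, which contradicts the genuine change in rotation number across the interval.

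For item (2) I would rely on the duality between $\beta$ and its Legendre dual $\alpha$. Convexity of $\beta$ always gives existence of one-sided derivatives $\beta'_\pm(p/q)$, and the gap $\beta'_+(p/q) - \beta'_-(p/q)$ equals the length of the flat of $\alpha$ over which the Aubry-Mather set of rotation $p/q$ is supported. If no $p/q$-caustic exists, the Aubry-Mather set at $p/q$ reduces to finitely many Birkhoff periodic orbits separated by genuine gaps in configuration space; the Peierls barrier $\cB(\cdot, p/q) - q\beta(p/q)$ is strictly positive on those gaps, and a direct computation with the generating function produces a nontrivial flat of $\alpha$, hence a corner of $\beta$ at $p/q$. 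Conversely, a $p/q$-caustic provides an invariant curve foliated by minimal $p/q$-periodic orbits, so the barrier vanishes identically, the flat collapses, and $\beta$ is differentiable at $p/q$. The generic statement then follows by a Baire-category perturbation: for each $p/q$, a $C^\infty$-small, strict-convexity- and $\cR$-preserving bump on $\partial\Om$ near a $p/q$-periodic orbit turns it into an isolated hyperbolic orbit and destroys any putative $p/q$-caustic, so the set of domains admitting a $p/q$-caustic is nowhere dense and its union over rationals is meager.

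The step I expect to be the main obstacle is ruling out flat pieces of $\beta$ in item (1); mere convexity comes almost for free from the concatenation inequality, but strict convexity requires the full rigidity of Aubry's no-crossing lemma, and the bookkeeping of how minimizers at nearby rotation numbers are cyclically ordered — particularly across the rational-irrational boundary where limits of periodic minimizers interact with Denjoy-type minimal sets — is where the genuine analytical work lies.
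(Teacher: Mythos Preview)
The paper does not supply a proof of this proposition at all: it is quoted from \cite{Ma3} with only the citation tag, and the surrounding text moves directly on to the symplectic-invariance theorem. There is therefore no in-paper argument to compare your proposal against.

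That said, your outline is broadly the standard Aubry--Mather route and is consistent with what one finds in \cite{Ma3} and the expositions in \cite{Si,So}: convexity via concatenation of periodic minimizers, strictness from Aubry's non-crossing lemma, and the differentiability criterion via the Peierls barrier / Legendre-dual flat. One point worth tightening: in your concatenation step you should be explicit that interleaving two periodic minimizers does not in general produce a \emph{periodic} configuration of the correct rotation number without some adjustment at the join, so the usual argument actually goes through asymptotic minimizers (or through Mather measures) rather than a literal splice; as written, your ``competitor of rotation number $\omega$'' is not obviously admissible. Otherwise the sketch is sound, and the part you flag as the main obstacle---ruling out flat pieces via the ordering of minimizers---is indeed where the substantive work in Mather's original proof lies.
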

\begin{thm}[{\sf Preliminary}]
The $\beta-$function of convex billiard maps is a symplectic invariance.
\end{thm}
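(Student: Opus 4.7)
The plan is to reduce the invariance of $\beta$ entirely to the invariance of the action already observed in Remark \ref{rmk:sym-inv}. Namely, if $\phi$ and $\psi$ are the billiard maps of two domains $\Om,\Om'$ and they are conjugate via an exact symplectic diffeomorphism $\Phi$ with generating function $g(x,\theta)$ (so $dg=\beta-\alpha$), then $\Phi$ sends configurations of $\phi$ bijectively to configurations of $\psi$, and it does so in an action-preserving way.

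First I would lift $\Phi$ to the universal cover. Since both $\phi$ and $\psi$ are twist maps of the annulus of the same Dehn type, the lift of $\Phi$ commutes with the deck translation $(x,v)\mapsto(x+1,v)$. Consequently the induced bijection on bi-infinite configurations $X=\{x_i\}\mapsto \Theta=\{\theta_i\}$ preserves the asymptotic rotation number $\rho(X)=\lim x_i/i$, and in particular carries $p/q$-configurations to $p/q$-configurations. Next, I would telescope the generating-function identity
\[
\hbar(\theta_i,\theta_{i+1})=h(x_i,x_{i+1})+g(x_i,\theta_i)-g(x_{i+1},\theta_{i+1})
\]
around a $p/q$-periodic loop. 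The boundary terms cancel exactly, so $A_\hbar(\Theta)=A_h(X)$, which is precisely the computation recorded in Remark \ref{rmk:sym-inv}. Therefore $\Phi$ is an action-preserving bijection between the $p/q$-configuration spaces $\cC_\phi(\cdot,p/q)$ and $\cC_\psi(\cdot,p/q)$, and in particular
\[
\min_{s\in[0,1]}\cB_\phi(s,p/q)=\min_{s\in[0,1]}\cB_\psi(s,p/q).
\]
Dividing by $q$ yields $\beta_\phi(p/q)=\beta_\psi(p/q)$ for every $p/q\in\Q\cap(0,1/2]$.

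Finally I would pass to irrational rotation numbers by density. Proposition \ref{prop:beta}(1) guarantees that $\beta$ is strictly convex on $(0,1/2]$, hence continuous there; since $\beta_\phi$ and $\beta_\psi$ agree on the dense rational subset, they must coincide everywhere on $(0,1/2]$.

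The main obstacle, and essentially the only non-routine point, is the bookkeeping in the first step: one needs to verify that $g$ is globally well-defined on the chosen universal covers (and not merely well-defined modulo integer periods) so that the telescoping cancellation is valid for closed $p/q$-periodic loops, not just for open finite segments. This is where the \emph{exactness} of $\omega$ is essential --- exactly as flagged in the remark following (\ref{p-h}) --- because it is what makes the primitive $g$ single-valued along closed orbits. Once this is confirmed, the rest of the argument is a direct telescoping and a density/continuity argument.
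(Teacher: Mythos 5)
Your proposal is correct and follows essentially the same route as the paper: the rational case is reduced to the action-preserving telescoping identity recorded in Remark \ref{rmk:sym-inv}, and the irrational case follows by approximating with rationals (the paper uses the defining limit $\beta(\om)=\lim\beta(p_n/q_n)$, you use continuity from convexity --- the same point). The extra care you take about lifting to the universal cover and the single-valuedness of $g$ is a reasonable elaboration of what the paper leaves implicit.
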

\begin{proof}
For any $h\in\Q\cap(0,1/2]$, $\beta(h)$ is invariant under exact symplectic transformations due to Remark \ref{rmk:sym-inv}. For irrational $0<h<1/2$, $\beta(h)$ is the limit of $\beta(p_n/q_n)$ with $\{p_n/q_n\}_{n\in\N}$ approaching to $h$, so $\beta(h)$ is also symplectic invariant. 
\end{proof}

Notice that the similar symplectic invariance of general convex Hamiltonian systems has been proved by Bernard in \cite{B}. However, it's usually unknown wether the Taylor expansion can be achieved for $\beta$ (see Theorem \ref{thm:m-t}). On the other side, other equivalent symplectic invariance of the convex billiards have been given in \cite{GM,MM} by geometric methods. 

As a new symplectic invariance, now we substitute the role of marked length spectrum by the $\beta-$function, and reform previous Conjectures  by the following:
\begin{que}\cite{So}
For any two different convex billiard domains $\Om$ and $\Om'$ satisfying $\beta_{\Om}=\beta_{\Om'}$, is it true that $\partial\Om$ is isometric to $\partial\Om'$ ?
\end{que}

\begin{que}\cite{So}
For billiard domain $\Om$ with smooth $\beta_\Om$ in $(0,\eps)$ for some $\eps>0$, is $\Om$ an ellipse?
\end{que}

These two questions are equivalent translation of previous Marked Length Spectrum Conjecture and Birkhoff Conjecture respectively. Indeed, due to (2) of Proposition \ref{prop:beta}, the local integrability of a billiard map implies $C^1$ smoothness of $\beta-$function in certain interval. Actually, evidence from \cite{MM} indicates the possibility to identify the billiard boundary $\partial\Om$ via the Taylor expansion of $\beta-$function near $0$, from a viewpoint of PDE. Basically, they constructed a {\sf wave trace} function $\tau(t)$ for (\ref{eq:dirichlet}), of which the {\sf singular support set} is contained in the length spectrum set. For smooth boundary, the wave trace has an asymptotic expansion as $t\rightarrow 0^+$, and the coefficients works as symplectic invariants. The first few coefficients are known to have geometric meanings, but the rest coefficients are hard to be determined and lost the geometric visualization.\\

We will use the following heuristic deduction to explain the importance of finding a substitutive symplectic invariance, say the $\beta-$function. For a $C^\infty$ (or $C^\om$) smooth billiard boundary $\partial\Om$, which can be parametrized by $\gamma(s)$ with an arc length  $s\in[0,1]$, it will be uniquely identified by the {\sf curvature radius} $\rho(s)$ with
\be
\rho(s)=\frac1 {|\ddot\gamma(s)|},\quad s\in[0,1].
\ee
Apparently the Fourier expansion of $\rho(s)=\sum_{k\in\Z}\rho_k e^{i2\pi ns}$ will uniquely identify the boundary $\gamma(s)$. On the other side, the $\beta-$function holds a Taylor expansion  
\[
\beta(h)=\sum_{n\in\N}\beta_n h^n
\]
where $\{\beta_n\}_{n\in\N}$ present as symplectic invariants. Intuitively once the correspondence between these two 
\[
\{\rho_k\}_{k\in\Z}\slash\cI\in\R^\infty\xrightarrow{\mathfrak F} \{\beta_n\}_{n\in\N}\in\R^\infty
\]
is identified to be a homeomorphism, then we can totally recover the boundary $\partial\Om$ via the $\beta-$function (here $\cI$ is the isometric transformation).

\subsection{Main result.}\label{s1.5}
 Inspired by the normal form iterations of \cite{LM}, we gift a Birkhoff normal form for the billiard map $\phi$, in a region close to the boundaries of $\A$. Benefit from this normal form, all the coefficients of the Taylor expansion of $\beta(h)$ can be established  and the relation with $\rho(s)$ can be explicitly observed.

\begin{thm}[{\sf Main 1}]\label{thm:m-t}
For strictly convex billiard domain $\Om$ with $C^\infty$ (or $C^\om$) smooth boundary $\partial\Om$, there exists a $C^\infty$ exact symplectic diffeomorphism $\Phi:\A\rightarrow\A$ such that the billiard map $\phi$ in (\ref{eq:b-m}) can be transformed into a Birkhoff normal form as
\be\label{eq:birk-norm-form}
\psi=:\Phi^{-1}\circ\phi\circ\Phi:\A\rightarrow\A,\quad\text{via }\left\{
\begin{aligned}
&x^+=x+\zeta_\infty(y),\\
&y^+=y_\infty,
\end{aligned}
\right.
\ee
of which formally
\be\label{eq:tar-thm}
 \zeta_\infty(y)=y+\sum_{i=1}^{\infty}c_{2i+1}y^{2i+1}
\ee
as $y\rightarrow 0$ being an infinitesimal.
\end{thm}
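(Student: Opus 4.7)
The plan is to adapt the Lazutkin--Marvizi Birkhoff normal form procedure to the billiard map, preserving exact symplecticity and reversibility at every step. I will work in a one-sided neighborhood of the boundary component $v=0$ of the annulus $\A$, where glancing trajectories force $\phi$ to degenerate and the dynamics becomes nearly a rigid shift. The only inputs are the generating function $h$ from (\ref{eq:gene-0}) and the reversibility lemma stated in Section \ref{s1.1}.

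First I would compute the Taylor expansion of $h$ near the diagonal. A Taylor expansion around the midpoint $\mu=(x+\bar x)/2$ with $t=\bar x-x$, using $\|\xi'\|\equiv 1$, $\xi'\cdot\xi''\equiv 0$, $\xi'\cdot\xi'''\equiv -\kappa^2$, gives
\[
h(x,\bar x)=-t+\frac{\kappa^2(\mu)}{24}\,t^3+O(t^5),
\]
with coefficients smooth in $\mu$. Combined with (\ref{p-h}), this lets me solve implicitly for $(\bar x,\bar v)$ as smooth functions of $(x,v)$ after $v$ is rescaled by $\kappa(x)^{-\sigma}$ for the appropriate Lazutkin exponent $\sigma$ (classically $\sigma=1/3$), and after reparametrizing $s$ by the Lazutkin variable the map attains the initial shear form
\[
x^+=x+y+O(y^2),\qquad y^+=y+O(y^2),
\]
from which the iteration can proceed.

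Next I construct inductively a sequence of exact symplectic transformations $\Phi_n$ whose composition brings the map into normal form modulo $y^{n+1}$. Following the commutative diagram of Remark \ref{rmk:sym-inv}, each step is produced by a generating function $g_n(x,\theta)$ so that the new map is generated by $\hbar=h+g_n(x,\theta)-g_n(\bar x,\bar\theta)$. The $n$-th cohomological equation is of the averaging form
\[
g(\theta+\omega(y),y)-g(\theta,y)=f_n(\theta,y)-\langle f_n\rangle(y),
\]
which admits a smooth solution once the zero Fourier mode is subtracted; the subtracted mean is precisely the next coefficient of $\zeta_n(y)$, and exact symplecticity is automatic since each $\Phi_n$ itself comes from a generating function. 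Assembling the $\Phi_n$ and invoking Borel's theorem converts the resulting formal series into a genuine $C^\infty$ exact symplectic $\Phi$ whose infinite jet along $\{y=0\}$ realises the normal form; the $C^\omega$ case requires additional quantitative estimates for convergence.

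The odd-power restriction in (\ref{eq:tar-thm}) I would derive from the reversibility (\ref{eq:sym-1}) combined with the symmetry $h(x,\bar x)=h(\bar x,x)$ of the generating function. After Lazutkin rescaling these symmetries produce a reversing involution $\wh{\cR}$ whose leading part acts as $(x,y)\mapsto(x,-y)$; provided each $g_n$ is chosen within the symmetry class compatible with $\wh{\cR}$, the sequence $\Phi_n$ stays equivariant, and imposing the limiting reversibility $\psi\circ\wh{\cR}\circ\psi=\wh{\cR}$ on $\psi(x,y)=(x+\zeta_\infty(y),y)$ yields $\zeta_\infty(y)+\zeta_\infty(-y)=0$, which is exactly (\ref{eq:tar-thm}). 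The principal obstacle is the simultaneous control of exact symplecticity and reversibility at every order: one must verify that the subspace of generating functions compatible with $\wh{\cR}$ remains large enough to solve each cohomological equation, equivalently that the non-equivariant part of every $f_n$ vanishes automatically from the iteration, and propagating this compatibility condition inductively is the key technical hurdle.
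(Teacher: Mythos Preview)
Your outline follows essentially the same strategy as the paper: Lazutkin coordinates, then an iterative normal-form procedure via generating functions, with reversibility forcing the odd-power structure. A few points of comparison and one imprecision are worth noting.

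The cohomological equation you write, $g(\theta+\omega(y),y)-g(\theta,y)=f_n-\langle f_n\rangle$, is degenerate at $y=0$ since $\omega(0)=0$; taken literally it cannot be solved. The paper (and implicitly your scheme) works strictly order by order in $y$: at the $k$-th step one peels off the coefficient $f_0(x)$ of $y^k$ in the first component, and the homological equation collapses to $\partial_x a = f_0-[f_0]$, a mere antiderivative on the circle with no small divisors. The paper writes the resulting change of variables explicitly,
\[
u = x + (k+1)v^{k-1}\!\int_0^x\frac{[f_0]-f_0(s)}{k+2}\,ds,\qquad
y = v\sqrt{1+\tfrac{2}{k+2}v^{k-1}\bigl([f_0]-f_0(x)\bigr)},
\]
and verifies directly that it preserves $y\,dy\wedge dx$.

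The ``principal obstacle'' you flag---simultaneous symplecticity and reversibility---is resolved in the paper not by restricting a priori to an equivariant class of generating functions, but by two structural lemmas applied \emph{after} each step. First (symplectic constraint), area preservation forces $(k+2)g_{k+1}+f_k'=0$ between the leading remainder coefficients. Second (symmetric constraint), if the map is mirror-symmetric under $\cT:(x,y)\mapsto(x,-y)$ then for even $k$ both $f_k$ and $g_{k+1}$ vanish identically. Since the explicit $\Phi_k$ above, built only for odd $k$, commutes with $\cT$, mirror symmetry propagates, and the even-order step is automatically vacuous: each iteration gains two orders rather than one, and $\zeta$ acquires only odd powers. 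So the compatibility you worry about need not be imposed on the generating functions; it follows a posteriori from the form of the map.

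Your invocation of Borel's theorem to realise the formal composition as a genuine $C^\infty$ diffeomorphism is in fact more careful than the paper, which simply composes the $\Phi_k$ without discussing convergence. Note also that the paper claims $\Phi$ only $C^\infty$ even when $\partial\Omega$ is $C^\omega$; no analytic convergence is asserted, so your caveat about the $C^\omega$ case is not needed here.
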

\begin{rmk}
\begin{enumerate}
\item The proof of this Theorem is iterative, of which each step transformation can be achieved from an explicit generating function, so $\psi$ inherits the same $\beta(h)$ as $\phi$. 
\item This Theorem tells us the arbitrarily high order jet of $\psi$ at the border $\{y=0\}$. Although (\ref{eq:birk-norm-form}) is not a normal form in any belt region $\{(x,y)\in\T\times[0,\dt)\}$ with $\dt\ll1$ (even for $C^\om-$smooth $\partial\Om$), it indeed supplies us with an effective expansion of the $\beta-$function.
\end{enumerate}
\end{rmk}
\begin{cor}[{\sf Main 2}]\label{cor:1}
The $\beta-$function of the billiard map can be formalized by 
\be\label{eq:beta}
\beta(h)=\frac{\sqrt2}3c^{3/2}+\sum_{i=1}^{+\infty}\frac{2i+1}{2i+3}c_{2i+1}\sqrt{2^{2i+1}}c^{\frac{2i+3}{2}}
\ee
with
\[
c=\frac12[\zeta_\infty^{-1}(h)]^2
\]
is also symplectic invariant. Here $\zeta_\infty^{-1}(h)$ is the inverse function of $h=\zeta_\infty(c)$.
\end{cor}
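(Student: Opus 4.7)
The plan is to exploit the symplectic invariance of $\beta$ (Preliminary Theorem): since the conjugating map $\Phi$ of Theorem~\ref{thm:m-t} is exact symplectic, one has $\beta_\phi(h)=\beta_\psi(h)$ in a neighbourhood of $h=0$, so it suffices to compute $\beta_\psi$ directly from~\eqref{eq:birk-norm-form}.

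Since $\psi$ is (formally) integrable with invariant curves $\{y=y_0\}$ on each of which the rotation number equals $\zeta_\infty(y_0)$, its generating function must depend only on $\bar x-x$. The next step is to track the Liouville $1$-form through the iterations of Theorem~\ref{thm:m-t} and verify that it reduces to the canonical shape $\alpha=\tfrac{1}{2}y^2\,dx$ (modulo an exact differential); this reflects the near-glancing expansion $1-\cos v=2\sin^{2}(v/2)\sim v^{2}/2$ of the original form $-\cos v\,dx$ near the border $\{v=0\}$. A direct computation using $\bar y=y$ and $\bar x=x+\zeta_\infty(y)$ then gives
\[
\psi^{*}\alpha-\alpha \;=\; \tfrac{1}{2}\bigl[\zeta_\infty^{-1}(\bar x-x)\bigr]^{2}\,d(\bar x-x),
\]
so that $\psi$ is generated by $\hbar(x,\bar x)=H(\bar x-x)$ with $H(u)=\int_{0}^{u}\tfrac{1}{2}[\zeta_\infty^{-1}(s)]^{2}\,ds$.

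For each $p/q\in\mathbb{Q}\cap(0,1/2]$ in lowest terms, the $\psi$-minimizer of $\cB(s,p/q)$ lies on $\{y=\zeta_\infty^{-1}(p/q)\}$ and advances by exactly $p/q$ at every step, whence
\[
\beta_\psi(p/q)\;=\;\frac{1}{q}\sum_{i=0}^{q-1}H(p/q)\;=\;H(p/q).
\]
Continuity in the rotation number extends $\beta_\psi=H$ to the irrationals. The substitution $u=\zeta_\infty^{-1}(s)$ converts $H(h)$ into $\int_{0}^{\zeta_\infty^{-1}(h)}\tfrac{u^{2}}{2}\zeta_\infty'(u)\,du$; expanding $\zeta_\infty'(u)=1+\sum_{i\geq 1}(2i+1)c_{2i+1}u^{2i}$, integrating term by term, and setting $c=\tfrac{1}{2}[\zeta_\infty^{-1}(h)]^{2}$ reproduces~\eqref{eq:beta}. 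The resulting series is to be understood asymptotically near $h=0$, matching the ``formally'' qualifier in Theorem~\ref{thm:m-t}.

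The main technical point is to confirm that, after the entire sequence of generating-function changes of variable comprising $\Phi$, the Liouville $1$-form really does reduce to $\tfrac{1}{2}y^{2}\,dx$ modulo an exact differential. This should follow from tracking the exactness condition $\Phi^{*}\alpha-\alpha=dg$ through each iteration, together with the explicit polynomial form of the successive generating functions used in the proof of Theorem~\ref{thm:m-t}. Once this canonical shape is secured, the remaining integration and substitution reduce to routine algebra.
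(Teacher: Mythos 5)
Your proposal is correct and it reproduces (\ref{eq:beta}) exactly, but it takes the Legendre--dual road to the one in the paper. The paper passes to the action variable $l=y^2/2$, interpolates the normal form by the formal Hamiltonian $H(l)=\int\zeta_\infty(\sqrt{2l})\,dl$, identifies Mather's $\alpha$-function of the resulting integrable map with $H$, and obtains $\beta$ as the Legendre conjugate $\beta(h)=\max_c\big(ch-\alpha(c)\big)$, the maximizer being $c=\frac12[\zeta_\infty^{-1}(h)]^2$. You instead compute the generating function of the normal form directly, $\hbar(x,\Ol x)=H(\Ol x-x)$ with $H(u)=\int_0^u\frac12[\zeta_\infty^{-1}(s)]^2\,ds$, note that for convex $H$ the $p/q$-minimizers are the equally spaced configurations so $\beta_\psi(p/q)=H(p/q)$, and integrate term by term; since $H'(h)=\frac12[\zeta_\infty^{-1}(h)]^2=c(h)$ is exactly $\beta'(h)$ in the paper's Legendre picture, the two computations coincide and your coefficients $\frac{2i+1}{2i+3}c_{2i+1}\sqrt{2^{2i+1}}$ check out. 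Your route has the virtue of making explicit the convexity/twist hypothesis ($H''>0$) hiding behind ``minimizers advance by $p/q$ at every step,'' which the paper's $\alpha$-function formulation absorbs silently. The technical point you defer --- that the Liouville form really is $\frac12 y^2\,dx$ after all iterations --- is not actually outstanding: it is precisely condition (\ref{eq:cond-1}), enforced at every step of Section \ref{s2}, and at the Lazutkin step by (\ref{generating}). One caveat applying equally to your argument and the paper's: the passage from $-\cos v\,dx$ to $\frac12y^2\,dx$ is not exact on the annulus (the difference has nonzero period over $\{v=0\}$), so (\ref{eq:beta}) is the $\beta$-function of the normal form, which differs from that of the original billiard by an overall scaling and a term linear in $h$; this is consistent with Corollary \ref{cor:2} computing $\beta_1,\beta_3$ from (\ref{generating}) rather than from (\ref{eq:beta}).
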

\begin{rmk}
Notice that for $c\ll1$, we can solve (\ref{eq:beta}) by a {\sf method of undetermined coefficients}, to formally express $\beta(h)$ by
\be\label{beta-formal}
\beta(h)=\sum_{n=0}^{+\infty}\beta_{2n+1} h^{2n+1}.
\ee
Then the following easy consequence can be deduced:
\end{rmk}
\begin{cor}[{\sf Main 3}]\label{cor:2}
Let $\Om$ be a strictly convex domain with smooth boundary, then
\[
\beta_3+\pi^2\beta_1\leq 0
\]
and equality holds if and only if $\Om$ is a disc.
\end{cor}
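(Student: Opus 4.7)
The plan is to extract $\beta_{1}$ and $\beta_{3}$ from the representation (\ref{eq:beta}) of Corollary \ref{cor:1}, to identify them with curvature integrals of $\partial\Om$ by unwinding the first step of the Birkhoff iteration in Theorem \ref{thm:m-t}, and then to apply Hölder's inequality under the Gauss--Bonnet constraint $\int_{0}^{1}\kappa(s)\,ds=2\pi$, with the equality case forcing constant curvature.

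For the first step I would substitute $y=\zeta_{\infty}^{-1}(h)$ and $c=y^{2}/2$ into (\ref{eq:beta}), expand each fractional power $c^{(2i+3)/2}=y^{2i+3}/2^{(2i+3)/2}$, and invert $h=\zeta_{\infty}(y)$ by Lagrange inversion; comparing with (\ref{beta-formal}) then expresses $\beta_{1}$ and $\beta_{3}$ as elementary polynomials in the first few $c_{2i+1}$. Only the linear term of $\zeta_{\infty}$ and the coefficient $c_{3}$ enter the combination $\beta_{3}+\pi^{2}\beta_{1}$, so from this step onwards it suffices to track those two data.

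For the second step I would trace through the first iteration of the Birkhoff normalisation in Theorem \ref{thm:m-t}. The generating function (\ref{eq:gene-0}) has the boundary expansion
\[
h(x,\bar x)=-|\bar x-x|+\frac{\kappa(x)^{2}}{24}|\bar x-x|^{3}+O\bigl(|\bar x-x|^{5}\bigr),
\]
and carrying out the first Birkhoff step in Lazutkin-type coordinates identifies $\beta_{1}$ with $-1$ (the perimeter) and $\beta_{3}$ with a positive multiple of the first Marvizi--Melrose invariant $\bigl(\int_{0}^{1}\kappa(s)^{2/3}\,ds\bigr)^{3}$. The reversibility (\ref{eq:sym-1}) is crucial at this point, since it forces the would-be half-integer corrections to vanish and produces the clean cubic curvature integral that makes the sharp $\pi^{2}$-factor appear later.

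Finally, Hölder's inequality with exponents $3/2$ and $3$ yields $\int_{0}^{1}\kappa(s)^{2/3}\,ds\leq(2\pi)^{2/3}$, with equality iff $\kappa$ is constant. Cubing and substituting into the formulas from the previous step produces $\beta_{3}+\pi^{2}\beta_{1}\leq 0$, and the equality case of Hölder characterises $\partial\Om$ as a circle, which under the arc-length normalisation is the unit-perimeter disc. The principal obstacle is the middle step: one has to push the first Birkhoff iteration far enough to recover the Marvizi--Melrose integral together with the exact universal constant that produces the sharp factor $\pi^{2}$ in the statement, a computation that is conceptually routine but combinatorially delicate because it mixes the boundary Taylor expansion of $h$ with the Lazutkin rescaling and with the parity constraints imposed by reversibility.
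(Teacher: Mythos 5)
Your proposal follows essentially the same route as the paper: both compute $\beta_1$ as (minus) the perimeter and $\beta_3$ as a constant times $\bigl(\int_0^1\kappa^{2/3}\,ds\bigr)^3$ by reading off the Lazutkin-coordinate generating function (\ref{generating}) (the first step of the iteration), and then conclude by a H\"older/isoperimetric-type estimate under the Gauss--Bonnet constraint, with equality forcing constant curvature. Your version correctly supplies the details the paper leaves implicit (the exponents $3/2$ and $3$ in H\"older and the sign $\beta_1=-1$, which the paper's displayed $\beta_1=1$ appears to misstate), so it is a faithful, slightly more careful rendering of the same argument.
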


In Sec. 8 of \cite{MM}, this inequality is essentially proved by the isoperimetric inequality. Also in \cite{Si,So} they gave similar results in their individually settings. Actually, due to (\ref{sr}), (\ref{generating}) and Lemma \ref{lem:time-rev}, this inequality can be shown in the Lazutkin's coordinate, which is given as our first step iteration in (\ref{eq:diag}).

\subsection{Outlook: a comparison with related works}

The novelty of the current paper is that we proposed a constructive way to obtain the Birkhoff normal form for the convex billiards which is symplectic, although Theorem \ref{thm:m-t} has been revealed in \cite{MM} via a spectral approach. The readers can find in Lemma \ref{lem:time-rev} an inductive formula ((\ref{model}) and (\ref{billiard1})), and detailed constructive equations has been given by the gray-shadowing expressions. Theoretically, starting from the Lazutkin's coordinate, this iterative scheme helps us to solve the coefficients $c_{2i+1}$ in (\ref{eq:tar-thm}) for arbitrarily large $i$, which accordingly solves all the coefficients of the $\beta-$function. In \cite{So}, the author ever solved the $\beta_n$ till order $9$, by using a computational assistance. \\

On the other side, we want to point out, a similar KAM iteration has been used in \cite{P} to show the existence of caustics with Diophantine rotation numbers, by using the approximated interpolating Hamiltonians. In \cite{Z}, the author successfully embedded the billiard map into a convex time-periodic Hamiltonian flow, and got the KAM theorem as a corollary of the classical KAM theory for Hamiltonian systems. However, the difference between those KAM iterations and the current paper is that the transformations are made w.r.t. different frequencies. In this paper the iterations are all taken in a region suitably close to the billiard boundary, so the effective expansion of $\beta$ is of $h=0$. Nonetheless, the KAM theorem in \cite{P} can be expected to get other expansions of $\beta$, but of Diophantine $h\neq 0$.

\begin{que}
For any two different convex billiard domains $\Om$ and $\Om'$ satisfying $\beta_{\Om}(h)=\beta_{\Om'}(h)$ for $h\in(0,\eps)\cap\mathcal D_{\alpha}^{\tau}$ for some $\eps>0$ and
\[
\mathcal D_{\alpha}^\tau:=\big\{\om\in\R|\forall (k,l)\in\Z^2\backslash\{0\},\ |k\om+l|\geq \frac{\alpha}{|k|^\tau}\big\}
\]
 is it true that $\partial\Om$ is isometric to $\partial\Om'$ ? 
\end{que}

\subsection{Organization of the article.}\label{s1.6}

 In Section \ref{s2}, we gave a detailed iterative scheme of the Birkhoff normal forms. Benefit from this, we proved Theorem \ref{thm:m-t}. In Section \ref{s3}, we concluded the Taylor expansion of $\beta(h)$ and proved associated Corollary \ref{cor:1} and Corollary\ref{cor:2}. 

\vspace{10pt}

\noindent{\bf Acknowledgement.} The work is supported by the National Natural Science Foundation of China (Grant No. 11901560). The Author is indebted to Prof. Vadim Kaloshin and Prof. Ke Zhang for helpful discussions, and is also grateful to Prof. de Simoi and Prof. Sorrentino for their  support of relevant computational materials.

\vspace{20pt}

\section{Iterative construction of the Birkhoff normal form}\label{s2}

\vspace{20pt}

As a warmup, let's review the Lazutkin's coordinate and take it as the initial step of iteration \cite{L}. For sufficiently small reflected angle $0<v\ll1$, the billiard map can be expressed by
\begin{eqnarray}\label{eq1}
\phi:\left\{
\begin{aligned}
&s'=&s+\alpha_1(s)v+\alpha_2(s)v^2+\alpha_3(s)v^3+F(s,v)v^4,\\
&v'=&v+\beta_2(s)v^2+\beta_3(s)v^3+G(s,v)v^4,
\end{aligned}
\right.
\end{eqnarray}
where
\be
\alpha_1(s)=2\rho(s),\;\alpha_2(s)=\frac{4}{3}\rho(s)\dot\rho(s),\nonumber\\
\alpha_3(s)=\frac{2}{3}\rho^2\ddot\rho+\frac{4}{9}\rho\dot\rho^2,\nonumber\\
\beta_2(s)=-\frac{2}{3}\dot\rho,\;\beta_3(s)=-\frac{2}{3}\rho\ddot\rho+\frac{4}{9}\dot\rho^2,\nonumber
\ee
with $\rho(s)$ being the curvature radius of $\partial\Omega$. By  applying the Lazutkin's transformation 
\be
\Phi: x =\frac{\int_0^s\rho(\tau)^{-2/3}d\tau}{\int_0^1\rho(s)^{-2/3}ds},\quad y =\dfrac{4\rho(s)^{1/3}\sin v/2}{\int_0^1\rho(s)^{-2/3}ds}
\ee
 the map (\ref{eq1}) becomes
\begin{eqnarray}\label{sr}
\varphi:\left\{
\begin{aligned}
\Ol x&=&x+y+y^3f(x,y),\\
\Ol y&=&y+y^4g(x,y),
\end{aligned}
\right.
\end{eqnarray}
which preserves the symplectic form $d({y^2}/{2}) \wedge dx$. The generating function has the form
\begin{equation}\label{generating}
\hbar(x,\Ol x)=4C_1^2\int_x^{\Ol x}\rho^{2/3}(s(\tau))d\tau+4C_1^3h(s,s'),
\end{equation}
with $h(s,s')$ given in (\ref{eq:gene-0}) and
\[
 C_1=\Big{(}\int_0^1\rho^{-2/3}(s)ds\Big{)}^{-1}.
\]
In other words, we have
\[
d\hbar(x,\Ol x)=\frac{y'^{2}}2d\Ol x-\frac {y^2}2dx
\]
and $\om=d\frac {y^2}2\wedge dx=d\alpha$ with $\alpha=\frac {y^2}2dx$. That implies $\varphi$ is a canonical symplectic diffeomorphism with the standard symplectic $2-$form. That's the foundation of applying Birkhoff iterations in the following paragraph.
\begin{rmk}
Notice that (\ref{sr}) is nearly integrable map for $0<l\ll1$, benefit from this property Lazutkin proved the existence of a cantor set of caustics by the KAM theorem \cite{L}.
\end{rmk}
Another crucial observation is that we need to inherit the `reversibility' in the Lazutkin coordinate:
\begin{lem}[{\sf Mirror Symmetry}]
The new billiard map $\varphi$ is reversible in the sense that $\varphi\circ\cT\circ\varphi=\cT$ for the mirror reflection 
\be\label{eq:sym-2}
\cT:(x,y)\in\cA\rightarrow (x,-y)\in\cA.
\ee
\end{lem}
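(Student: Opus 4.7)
The plan is to derive the mirror symmetry from the time-reversal structure of $\phi$, transported through the Lazutkin change of coordinates. The Lazutkin generating function splits as
\[
\hbar(x,\Ol x) = [H(\Ol x)-H(x)] + 4C_1^3\,h(s(x),s(\Ol x)),
\]
where the first bracket is an exact gauge in $(x,\Ol x)$ and the second piece is symmetric in its two arguments by virtue of $h(s,\Ol s)=-\|\xi(s)-\xi(\Ol s)\|=h(\Ol s,s)$. This symmetry is what encodes, in Lazutkin coordinates, the reversibility of the billiard map.

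I would first extend $\varphi$ by analytic continuation from the physical region $y>0$ to the full annulus $\cA$ on which $\cT$ is defined; this is possible because the implicit equation $y^2/2=-\partial_1\hbar$ determines $\varphi$ up to a branch choice for the sign of $y$, which is fixed by analytic continuation. Rewriting the desired identity as the equivalent reversibility $\cT\circ\varphi\circ\cT=\varphi^{-1}$, I would then compare the generating functions of both sides. Standard generating-function calculus gives $\varphi^{-1}$ the generating function $-\hbar(\Ol x,x)$, while $\cT\circ\varphi\circ\cT$ is generated (after the branch-swap induced by $\cT$) by $\hbar(x,\Ol x)$ itself. Using the symmetry of the nontrivial part of $\hbar$, these two expressions agree modulo the exact gauge $2[H(\Ol x)-H(x)]$; and this gauge shift is precisely what the sign-flip performed by $\cT$ contributes to the $y^2/2$-relation, so the two induced symplectic maps coincide, yielding $\varphi\circ\cT\circ\varphi=\cT$.

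The main obstacle is the careful bookkeeping of branches and gauges: because the generating-function relation involves only $y^2$, the sign of $y$ is resolved by analytic continuation, and one must verify that the exact gauge $H(\Ol x)-H(x)$ combines with this branch choice in exactly the way required by $\cT$. As a concrete sanity check, the identity can be expanded order by order in $y$ using (\ref{sr}); the resulting constraints, such as $g(x,0)=0$ (oddness of $g$ in $y$ at leading order) and a specific relation between $\partial_x f$ and $\partial_y f$ at $y=0$, all follow from the Lazutkin coefficients and the symmetry of the generating function, providing an independent verification of the argument above.
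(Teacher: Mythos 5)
Your strategy is genuinely different from the paper's. The paper proves this lemma by a direct conjugation computation: writing $\varphi=\Phi\circ\phi\circ\Phi^{-1}$, observing that the Lazutkin transformation satisfies $\Phi\circ\cT=\cT\circ\Phi$ (because $y\propto\sin(v/2)$ is odd in $v$), and reducing $\varphi\circ\cT=\cT\circ\varphi^{-1}$ to the already-established reversibility $\phi\circ\cR\circ\phi=\cR$ via the relation $\cT=\cS^{-1}\circ\cR$ with $\cS(s,v)=(s,v+\pi)$. No generating functions appear.

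Your generating-function route has a genuine gap at its central step. The involution $\cT:(x,y)\mapsto(x,-y)$ acts \emph{trivially} on the canonical variables $(x,\,y^2/2)$ in which $\hbar$ generates $\varphi$, because the momentum $y^2/2$ is even in $y$. Consequently the generating function cannot distinguish $\varphi$ from $\cT\circ\varphi\circ\cT$: the entire content of the lemma resides in how the signs of $y$ and $\Ol y$ are assigned when one solves $y^2/2=-\partial_1\hbar$ and $\Ol y^2/2=\partial_2\hbar$, i.e., in data that $\hbar$ does not see. The classical principle ``symmetric generating function $\Rightarrow$ reversibility with respect to negation of the momentum,'' which is what the symmetry $h(s,\Ol s)=h(\Ol s,s)$ actually yields, proves reversibility with respect to $-\cos v\mapsto\cos v$, i.e., with respect to $\cR:v\mapsto\pi-v$ --- that is the \emph{previous} lemma of the paper, not this one. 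Moreover the concrete algebra is off: $\hbar(x,\Ol x)-\bigl(-\hbar(\Ol x,x)\bigr)=8C_1^3\,h(s,\Ol s)$, i.e., twice the \emph{symmetric} part, not the exact gauge $2[H(\Ol x)-H(x)]$; two generating functions differing by a non-gauge term generate different maps, so the asserted cancellation does not occur. A further warning sign: the branch of $\hbar$ with $\Ol x<x$ does not generate the extension of $\varphi$ to $y<0$ at all, since $\partial_1\hbar=4C_1^2\rho^{2/3}(\cos v-1)\to-8C_1^2\rho^{2/3}\neq0$ as $\Ol x\to x^-$; that branch describes orbits with $v$ near $\pi$, the opposite edge of the annulus. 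To close the argument you must input exactly the information your computation omits --- how $\phi$, extended through $v=0$, transforms under $v\mapsto-v$ --- which is what the paper's conjugation argument supplies.
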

\begin{proof}
Notice that $\varphi=\Phi\circ\phi\circ\Phi^{-1}$, $\Phi\circ\cT=\cT\circ\Phi$, $\cR\circ\cR=Id$ and $\phi\circ\cR\circ\phi=\cR$ due to (\ref{eq:sym-1}). That implies
\be
\varphi\circ\cT&=&\Phi\circ\phi\circ\Phi^{-1}\circ\cT=\Phi\circ\phi\circ\cT\circ\Phi^{-1}\nonumber\\
&=&\Phi\circ\phi\circ\cS^{-1}\circ\cR\circ\Phi^{-1}\nonumber\\
&=&\Phi\circ\cT\circ\cR^{-1}\circ\phi\circ\cR\circ\Phi^{-1}\nonumber\\
&=&\cT\circ\Phi\circ\cR^{-1}\circ\phi\circ\cR\circ\Phi^{-1}\nonumber\\
&=&\cT\circ\Phi\circ\phi^{-1}\circ\cR^{-1}\circ\cR\circ\Phi^{-1}\nonumber\\
&=&\cT\circ\Phi\circ\phi^{-1}\circ\Phi^{-1}=\cT\circ\varphi^{-1}
\ee
where the operator $\cS(s,v)=(s,v+\pi)$ satisfying $\cS\circ\cT=\cR$.
\end{proof}
\begin{rmk}
Later we will see the mirror symmetry will be preserved in the Birkhoff iterations.
\end{rmk}

\subsection{Coordinate Transition Chain} We can formalize the iteration by the following commutative diagram: Suppose we have a transition chain like
\be\label{eq:diag}
\begin{CD}
(x,y)@>\Phi_0>>(x_1,y_1)@>\Phi_1>>\cdots @>\Phi_{n-1}>>(x_n,y_n)@>\Phi_n>>\\
@V\varphi VV @ V\varphi_1 VV @ V  VV @ V\varphi_n VV  \\
(\Ol x,\Ol y)@> \Phi_0>>(\Ol x_1,\Ol y_1)@>\Phi_1>>\cdots @>\Phi_{n-1}>>(\Ol x_n,\Ol y_n)@>\Phi_n>>
\end{CD}
\ee
which transforms (\ref{sr}) into a limit map:
\[
\varphi_\infty:\left\{
\begin{aligned}
&x_\infty^+=x_\infty+\zeta_\infty(y_\infty),\\
& y_\infty^+=y_\infty,
\end{aligned}
\right.
\]
defined for $(x_\infty,y_\infty)\in \T\times [0,r)$ with $r\ll1$. Besides, during each step we have
\[
y_{k+1} dy_{k+1}\wedge dx_{k+1}=y_kdy_k\wedge dx_k,\quad\forall k\in\N.
\]
Associated to it, each $\Phi_l$ should be generated by a $h_k(x_k,x_{k+1})$ satisfying 
\be\label{eq:cond-1}
dh_k=y_{k+1}^2/2dx_{k+1}-y_k^2/2dx_k,\quad\forall k\in\N
\ee
and commutes with $\cT$, i.e.
\be\label{eq:cond-2}
\Phi_k\circ\cT=\cT\circ\Phi_k,\quad\forall k\in\N.
\ee
If so, the mirror symmetry can be passed to $\varphi_{k+1}$.
\begin{lem}\label{lem:pass-sym}
Suppose we have (\ref{eq:cond-2}) and $\varphi_k$ is mirror symmetric, then $\varphi_{k+1}=\Phi_k\circ\varphi_k\circ\Phi_k^{-1}$ is also mirror symmetric, i.e. $\varphi_{k+1}\circ\cT\circ\varphi_{k+1}=\cT$.
\end{lem}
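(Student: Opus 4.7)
The plan is a short algebraic manipulation that closely parallels the proof of the Mirror Symmetry lemma for $\varphi$ given just above. The two ingredients are the conjugation formula $\varphi_{k+1}=\Phi_k\circ\varphi_k\circ\Phi_k^{-1}$ and the commutation hypothesis (\ref{eq:cond-2}), namely $\Phi_k\circ\cT=\cT\circ\Phi_k$. Taking inverses on both sides of the latter yields the equivalent identities $\Phi_k^{-1}\circ\cT=\cT\circ\Phi_k^{-1}$ and $\Phi_k^{-1}\circ\cT\circ\Phi_k=\cT$, which will be the key simplification step.

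First I would substitute the definition of $\varphi_{k+1}$ into the left-hand side of the target identity, obtaining
\[
\varphi_{k+1}\circ\cT\circ\varphi_{k+1}=\Phi_k\circ\varphi_k\circ\Phi_k^{-1}\circ\cT\circ\Phi_k\circ\varphi_k\circ\Phi_k^{-1}.
\]
Next I would collapse the central block $\Phi_k^{-1}\circ\cT\circ\Phi_k$ to $\cT$ using the identity above, which reduces the right-hand side to $\Phi_k\circ\varphi_k\circ\cT\circ\varphi_k\circ\Phi_k^{-1}$. At this stage the assumed mirror symmetry of $\varphi_k$, that is $\varphi_k\circ\cT\circ\varphi_k=\cT$, simplifies the inner block to $\cT$, leaving $\Phi_k\circ\cT\circ\Phi_k^{-1}$. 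A final application of (\ref{eq:cond-2}) rewrites this as $\cT\circ\Phi_k\circ\Phi_k^{-1}=\cT$, which is the desired equality.

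I do not anticipate any genuine obstacle: the proof is purely formal, and is in fact the same chain of substitutions that was used to establish the reversibility of $\varphi$ itself. The only subtlety worth flagging is that (\ref{eq:cond-2}) is invoked twice, once to push $\cT$ through $\Phi_k^{-1}$ in the middle and once more through $\Phi_k$ at the end, so the whole scheme rests on the fact that every coordinate change $\Phi_k$ in the transition chain commutes with the mirror reflection $\cT$. This also clarifies why (\ref{eq:cond-2}) is the natural structural condition to impose on the generating functions $h_k$ when designing the iteration, since it is precisely what guarantees that reversibility is preserved step by step.
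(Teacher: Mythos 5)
Your proof is correct and follows essentially the same chain of substitutions as the paper: the paper writes the argument as $\varphi_{k+1}\circ\cT=\cT\circ\varphi_{k+1}^{-1}$ by pushing $\cT$ through $\Phi_k^{-1}$, then $\varphi_k$, then $\Phi_k$, while you expand $\varphi_{k+1}\circ\cT\circ\varphi_{k+1}$ directly and collapse the inner blocks — a trivially equivalent formulation. No gaps.
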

\begin{proof} The following analysis
\be
	\varphi_{k+1}\circ\cT &=&\Phi_{k}\circ \varphi_{k} \circ \Phi_{k}^{-1} \circ\cT\nonumber\\
	&=&\Phi_{k}\circ \varphi_{k} \circ \cT\circ\Phi_{k}^{-1}\nonumber\\
	&=&\Phi_{k}\circ \cT\circ\varphi_{k}^{-1} \circ\Phi_{k}^{-1}\nonumber\\
	&=&\cT\circ\Phi_{k}\circ\varphi_{k}^{-1} \circ\Phi_{k}^{-1}=\cT\circ\varphi_{k+1}^{-1}.\nonumber
	\ee
	directly leads to this conclusion.
\end{proof}
Notice that the transformations for $k=0,1,2$ has been made in (\ref{sr}). Inspired by that, we expect the $\varphi_k$ has a form 
\be\label{eq:general-form}
\varphi_k:\left\{
\begin{aligned}
	&\Ol x = x + \zeta(y) +  y^{k} f(x,y),\\ 
	& \Ol y = y + y^{k+1} g(x,y), \quad k\geq 3.
	\end{aligned}
	\right.
\ee
If so, conditions (\ref{eq:cond-1}) and (\ref{eq:cond-2}) almost uniquely establish the expression of $\varphi_k$.

\begin{lem}[{\sf Symplectic Constraint}]\label{lem:symp-cancel}
Suppose $\varphi_k:(x,y) \mapsto (\Ol x, \Ol y)$ is a diffeomorphism of the form (\ref{eq:general-form}) preserving the symplectic form $ydy\wedge dx$. For $y\ll1$, if the expansion of $\varphi_k$ can be expressed by 
\[
	\Ol x = x + \zeta(y) + \sum_{n=k}^\infty f_{n}(x) y^{n} , 
	\quad \Ol y = y + \sum_{n = k+1}^\infty g_{n}(x) y^{n}, 
\]
then we have
\[
(k+2) g_{k+1}(x) + f_k'(x) = 0.
\] 
\end{lem}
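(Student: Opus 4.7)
The plan is to unwind the area-preservation condition
\[\bar y\,d\bar y\wedge d\bar x = y\,dy\wedge dx\]
and match the coefficient of the lowest nontrivial power of $y$ in the resulting two-form identity.

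First I would rewrite the condition as $d\bigl(\bar y^2/2\bigr)\wedge d\bar x = d\bigl(y^2/2\bigr)\wedge dx$. From the stated expansions,
\[\tfrac12\bar y^2 = \tfrac12 y^2 + g_{k+1}(x)\,y^{k+2} + O(y^{k+3}),\qquad \bar x = x + \zeta(y) + f_k(x)\,y^k + O(y^{k+1}),\]
so, taking exterior derivatives,
\[d\bigl(\tfrac12\bar y^2\bigr) = y\,dy + (k+2)g_{k+1}(x)\,y^{k+1}\,dy + g_{k+1}'(x)\,y^{k+2}\,dx + \cdots,\]
\[d\bar x = \bigl(1 + f_k'(x)\,y^k + \cdots\bigr)\,dx + \bigl(\zeta'(y) + k f_k(x)\,y^{k-1} + \cdots\bigr)\,dy.\]

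Next I would expand the wedge product and isolate the coefficient of $y^{k+1}\,dy\wedge dx$. Since $\zeta(y) = y + \sum_i c_{2i+1}y^{2i+1}$ yields $\zeta'(y) = 1 + O(y^2)$ with no linear contribution in $y$, only two terms survive at that order: the cross $y\,dy \wedge f_k'(x)\,y^k\,dx$, contributing $f_k'(x)\,y^{k+1}\,dy\wedge dx$, and the straight $(k+2)g_{k+1}(x)\,y^{k+1}\,dy \wedge dx$. Every other cross term either carries an extra $y$ from $g_{k+1}'(x)\,y^{k+2}\,dx$ paired with $\zeta'(y)\,dy$, or combines two of the correction pieces and hence lands at order $y^{2k+1}$ or higher.

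Since the right-hand side $y\,dy\wedge dx$ contributes nothing at order $y^{k+1}$, matching coefficients forces $(k+2)g_{k+1}(x) + f_k'(x) = 0$, as claimed. The only delicate step is the bookkeeping of cross terms in the wedge expansion, but the hierarchy of powers built into the ansatz (\ref{eq:general-form}), together with the fact that $\zeta'$ has no linear $y$-term, makes it routine to verify that none of the higher-order cross terms contaminates the $y^{k+1}$ balance; so no serious obstacle is expected.
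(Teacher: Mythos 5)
Your proposal is correct and follows essentially the same route as the paper: expand $d(\Ol y^{\,2})\wedge d\Ol x$ using the given series and read off the coefficient of $y^{k+1}\,dy\wedge dx$, which must vanish. The only cosmetic difference is the factor $\tfrac12$ normalization of the symplectic form; the bookkeeping of cross terms is identical, so nothing further is needed.
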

\begin{proof}
Notice that 
\[
\begin{aligned}
 & 	d\Ol y^2 \wedge d\Ol x \\
 	& = 
	d\big[y + g_{k+1} y^{k+1} + g_{k+2} y^{k+2} + O(y^{k+3}) \big]^2
	\\
	& \quad \quad \wedge d\big[  x + \zeta(y) + f_k y^{k} + f_{k+1} y^{k+1}  + O(y^{k+2})\big]  \\
  & = d\big[  y^2 + 2g_{k+1} y^{k+2} + 2 g_{k+2} y^{k+3} + O(y^{k+4})   \big] \\
   & \qquad \wedge \left( dx + f_k' y^{k} dx + f_{k+1}' y^{k+1}dx + (\zeta' + O(y^{k-1})dy) \right)  \\
  & = d(y^2) \wedge dx + [2g_{k+1}(k+2) + 2 f_k' ] y^{k+1}dy \wedge dx + O(y^{k+2}),
\end{aligned}
\]
therefore the $O(y^{k+1})-$term should be $0$. 
\end{proof}


\begin{lem}[{\sf Symmetric Constraint}]\label{lem:time-rev}
Suppose $\varphi_{k}$ of the form (\ref{eq:general-form}) is mirror symmetric, then the expansion of $\varphi_k$
\[
	x' = x + \zeta_k(y) + \sum_{n=k}^\infty f_n(x) y^{n} , 
	\quad y' = y + \sum_{n = k+1}^\infty g_n(x) y^{n}, 
\]
with $\zeta(y) = \sum_{n=1}^{k-1} \zeta_n y^n$ should satisfy the following:
\begin{enumerate}
\item $\zeta_k(y)$ is an odd function, namely, coefficient $\zeta_n =0$ for all even $n$;
\item If $k$ is even, then $g_{k+1}(x) =0,\; f_k(x) = 0$. 
\end{enumerate}
\end{lem}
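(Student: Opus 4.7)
The plan is to convert the mirror-symmetry relation $\varphi_k \circ \cT \circ \varphi_k = \cT$ into a pair of formal power-series identities in $y$ and then read off the claimed vanishings by matching coefficients. Spelled out pointwise, this relation says that for every $(x,y)$, setting $(\Ol x,\Ol y):=\varphi_k(x,y)$, one has $\varphi_k(\Ol x,-\Ol y)=(x,-y)$. Writing out the two components of this identity gives
\[
\begin{aligned}
\Ol x+\zeta(-\Ol y)+\sum_{n\geq k}f_n(\Ol x)(-\Ol y)^n&=x,\\
-\Ol y+\sum_{n\geq k+1}g_n(\Ol x)(-\Ol y)^n&=-y.
\end{aligned}
\]

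Substituting the expansion of $\Ol x$ into the first equation and rearranging turns it into
\[
\zeta(y)+\zeta(-\Ol y)+\sum_{n\geq k}f_n(x)\,y^n+\sum_{n\geq k}f_n(\Ol x)(-\Ol y)^n=0.
\]
Because $\Ol y=y+O(y^{k+1})$, replacing $y$ by $\Ol y$ inside $\zeta$ only perturbs the expression at orders $\geq k+1$, so for each $1\leq m\leq k-1$ the coefficient of $y^m$ collapses to $\zeta_m[1+(-1)^m]$. Setting this to zero forces $\zeta_m=0$ for every even $m$, which is (1). At order $y^k$ only the $n=k$ terms of the two $f$-sums contribute, yielding $f_k(x)[1+(-1)^k]=0$; when $k$ is even this gives $f_k\equiv 0$, which is half of (2).

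For the remaining claim, substituting the $\Ol y$-expansion into the second identity and rearranging produces
\[
\sum_{n\geq k+1}g_n(x)\,y^n=\sum_{n\geq k+1}g_n(\Ol x)(-\Ol y)^n.
\]
The coefficient of $y^{k+1}$ on the right reduces to $g_{k+1}(x)(-1)^{k+1}$, so the identity at that order reads $g_{k+1}(x)[1-(-1)^{k+1}]=0$; for $k$ even this forces $g_{k+1}\equiv 0$, completing (2). The only delicate point throughout is verifying that higher-order tails in the expansions of $\Ol x$ and $\Ol y$ never contaminate the coefficients being extracted, but this is immediate from $\Ol y-y=O(y^{k+1})$ and $\Ol x-x-\zeta(y)=O(y^k)$. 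Thus the main obstacle is really careful bookkeeping rather than any conceptual difficulty.
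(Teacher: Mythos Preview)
Your argument is correct and follows essentially the same route as the paper: you unfold the identity $\varphi_k(\Ol x,-\Ol y)=(x,-y)$ componentwise, expand in powers of $y$, and read off the vanishing conditions $\zeta_m(1+(-1)^m)=0$, $f_k(x)(1+(-1)^k)=0$, and $g_{k+1}(x)(1-(-1)^{k+1})=0$. The paper's own proof is considerably terser (and in fact contains apparent typos in the displayed exponent and parity), so your more explicit bookkeeping of why the $O(y^{k+1})$ tail in $\Ol y$ and the $O(y^k)$ tail in $\Ol x$ do not contaminate the extracted coefficients is a genuine improvement in clarity, not a different method.
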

\begin{proof}
Applying the mirror symmetry to the second component of $\varphi_k$, there holds
\[
g_{k+1}(x) (-y)^{k+4}=g_{k+1}(x) y^{k+4}.
\]
That implies for each integer $g_{k+1} =0$ if $k$ is odd. Similarly, apply the mirror symmetry to the first component of $\varphi_k$, we get the leading term equation by
\[
\zeta(y)+\zeta(-y)+[y^k+(-y)^k]f_k(x)=0,
\]
which indicates $\zeta(y)$ be odd and $f_k(x)\equiv0$ for even $k$.
\end{proof}
\begin{rmk}
This Lemma implies that each step transformation $\Phi_k$ will raise the power of $y$ by $2$ for both the two components of $\varphi_k$ in (\ref{eq:general-form}). That property decides the special form of the $\beta-$function (see Sec. \ref{s3}).
\end{rmk}
Now the only unsolved ingredient for Theorem \ref{thm:m-t} is the existence of $\Phi_k$ satisfying (\ref{eq:cond-1}) and (\ref{eq:cond-2}), which can be guaranteed by the following construction:
%
\begin{lem}[{\sf Iteration}]\label{lem:iteration}
For a billiard type diffeomorphism $\varphi_k:(x,y)\rightarrow(\Ol x, \Ol y)$ preserving $ydy\wedge dx$ and of the form 
\be\label{model}
\quad\varphi_k:
 \left\{
	\begin{aligned}
	\Ol x &= x + \zeta_k(y) + y^k f(x, y) \\
	& =  x + \zeta_k(y) + y^k f_0(x) + y^{k+1} f_1(x) + O(y^{k+2})   \\
	 \Ol y& = y + y^{k+1} g(x,y) \\
	& = y + y^{k+1} g_0(x) + y^{k+2} g_1(x) + O(y^{k+3}),\quad k\geq3,
	\end{aligned}
	\right.
\ee
we can find a coordinate change $\Phi_k^{-1}:(u,v)\rightarrow(x,y)$ with $vdv\wedge du=ydy\wedge dx$
\be\label{birk-tran}
	y = v\sqrt{1 + \frac2{k+2} v^{k-1} ([f_0]-f_0(x))}, \\
 u = x + (k+1) v^{k-1} \int_0^x \frac{[f_0]-f_0(s)}{k+2}ds ,
\ee
such that the updated normal form $\varphi_{k+1}:(u,v)\rightarrow(\Ol u,\Ol v)$ becomes
\be\label{billiard1}
\varphi_{k+1}:\left\{
	\begin{aligned}
	& \Ol u = u +\zeta_{k+1}( v)+v^{k+1}f_+(u,v)\\
	&\Ol v = v + v^{k+2}g_+(u,v).
	\end{aligned}
	\right.
\ee
Moreover, if $H(x,\Ol x)$ and $G(u,x)$  are the generating functions of $\varphi_k$ and $\Phi_k$ respectively, 
then $\varphi_{k+1}$ is generated by $H_+(u,\Ol u)$ of the form
\be\label{eq:gene-iter}
H_+(u,\Ol u)=G(u,x)-G(\Ol u, \Ol x)+H(x,\Ol x).
\ee
\end{lem}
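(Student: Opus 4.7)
The plan splits into three parts: check that the coordinate change (\ref{birk-tran}) is symplectic, verify by direct expansion that the conjugated map takes the form (\ref{billiard1}), and construct the generating function $G$ together with the composition formula (\ref{eq:gene-iter}).

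For the symplectic check, set $P(x) := [f_0] - f_0(x)$ and $F(x) := \int_0^x P(s)/(k+2)\,ds$, so that (\ref{birk-tran}) becomes $y^2 = v^2 + \tfrac{2}{k+2}v^{k+1}P(x)$ and $u = x + (k+1)v^{k-1}F(x)$. A direct differentiation yields $v\,dv\wedge du = [v + (k+1)v^k F'(x)]\,dv\wedge dx$ and $y\,dy\wedge dx = [v + \tfrac{k+1}{k+2}v^k P(x)]\,dv\wedge dx$, and these coincide because $F'(x) = P(x)/(k+2)$.

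To verify (\ref{billiard1}), first invert to $y = v + \tfrac{v^k P(x)}{k+2} + O(v^{2k-1})$ and substitute into $\Ol x - x = \zeta_k(y) + y^k f_0(x) + O(y^{k+1})$. Using the oddness of $\zeta_k$ from Lemma \ref{lem:time-rev} and the Taylor expansion of $F(\Ol x) - F(x)$ (with $\Ol x - x = v + O(v^3)$), three contributions at order $v^k$ combine as
\[
\Ol u - u = \zeta_k(v) + v^k\Big[f_0(x) + \tfrac{P(x)}{k+2} + \tfrac{(k+1)P(x)}{k+2}\Big] + O(v^{k+1}) = \zeta_k(v) + [f_0]v^k + O(v^{k+1}),
\]
so setting $\zeta_{k+1}(v) := \zeta_k(v) + [f_0]v^k$ absorbs the $x$-dependent part at order $v^k$. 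For the $v$-component, the symplectic identity $v^2 = y^2 - \tfrac{2}{k+2}v^{k+1}P(x)$ applied to both $(x,y)$ and $(\Ol u,\Ol v,\Ol x, \Ol y)$ gives
\[
\Ol v^2 - v^2 = (\Ol y^2 - y^2) - \tfrac{2}{k+2}\big[\Ol v^{k+1}P(\Ol x) - v^{k+1}P(x)\big] = 2v^{k+2}\Big[g_0(x) + \tfrac{f_0'(x)}{k+2}\Big] + O(v^{k+3}),
\]
where the first piece contributes $2v^{k+2}g_0(x)$ and the second piece contributes $\tfrac{2}{k+2}v^{k+2}f_0'(x)$ via $P'(x) = -f_0'(x)$. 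The bracket vanishes by the symplectic constraint of Lemma \ref{lem:symp-cancel}, giving $\Ol v - v = O(v^{k+2})$ as required.

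For the generating function, a direct computation on the graph of $\Phi_k^{-1}$ yields
\[
\tfrac{v^2}{2}du - \tfrac{y^2}{2}dx = \tfrac{k-1}{2}\,d\big(v^{k+1}F(x)\big),
\]
showing that this 1-form is exact; inverting $v^{k-1} = (u-x)/[(k+1)F(x)]$ expresses $v$ as a function of $(u,x)$ and determines $G(u,x)$ (up to constant) satisfying $G_u = v^2/2$ and $G_x = -y^2/2$, which is exactly condition (\ref{eq:cond-1}). The composition formula (\ref{eq:gene-iter}) then follows from the standard rule recalled in Remark \ref{rmk:sym-inv} applied to the factorization $\varphi_{k+1} = \Phi_k \circ \varphi_k \circ \Phi_k^{-1}$. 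The main obstacle is the bookkeeping in the second part, in particular the cancellation at order $v^{k+2}$ in $\Ol v^2 - v^2$: without the symplectic constraint $(k+2)g_0(x) + f_0'(x) = 0$ one would be left with a residual $v^{k+1}g_0(x)$ term in $\Ol v - v$ that would block the iteration, so this step crucially uses both the explicit choice of coordinate change and the symplectic hypothesis on $\varphi_k$.
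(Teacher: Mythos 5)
Your proof is correct and follows essentially the same route as the paper: the coordinate change, the cancellation at order $v^k$ in $\Ol u - u$ (yielding $\zeta_{k+1}=\zeta_k+[f_0]v^k$), the cancellation at order $v^{k+1}$ in $\Ol v - v$, and the generating function $G(u,x)$ obtained from $v^{k-1}=(u-x)/[(k+1)F(x)]$ are exactly the objects the paper constructs, the only difference being that you verify the stated formula (\ref{birk-tran}) directly where the paper derives it from an ansatz $x=u+v^{k-1}a$, $y=v+v^kb+\dots$ via the auxiliary generating function $S(x,v)$. A minor merit of your organization is that it makes explicit that the vanishing of the residual term in $\Ol v - v$ rests on the identity $(k+2)g_0+f_0'=0$ from Lemma \ref{lem:symp-cancel}, a consistency point between (\ref{raise1}) and (\ref{eq:b}) that the paper leaves implicit.
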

\begin{proof}
Formally $\Phi_k^{-1}$ can be expressed by
\be\label{eq:candi-tran}
x=u+v^{k-1}a(u,v),\quad y=v+v^kb(u)+v^{k+1}e(u,v)
\ee
with $a(u,v),\;b(u)$ and $e(u,v)$ determined later on, and we can always transform (\ref{model}) into 
\be\label{formal1}
\Ol u=u+\zeta_k(v)+v^{k-1}c(u,v),\quad \Ol v=v +v^kd(u,v)
\ee
as long as $0\leq y\ll1$. The relation between $a,\ b,\ e$ and $c,\ d$ can be established by 
\be\label{high-2}
\Ol u&=&u+v^{k-1}a(u,v)+\zeta_k(v+v^kb+v^{k+1}e)\nonumber\\
& &+(v+v^kb+v^{k+1}e)^kf(u+v^{k-1}a, v+v^kb+v^{k+1}e)\nonumber\\
& &-(v+v^kd)^{k-1}a(u+\zeta_k(v)+v^{k-1}c, v +v^kd),
\ee
which can be reduced into
\be
 \Ol u&=&u +\zeta_k(v)+v^k\Big{[}\zeta_k'(v)b(u)+f_0(u)-a_u(u,0)\frac{\zeta_k}{v}\Big{]}\nonumber\\
 & &+v^{k+1}\Big{[}f_1-a_{uu}\frac{\zeta_k^2}{2v^2}-a_{uv}(u,0)\frac{\zeta_k}{v}\Big{]}\nonumber\\
& &-v^{2k-2}\Big{[}a_uc+d(k-1)a\Big{]}+h.o.t.
\ee
If we choose $a(u, v), b(u)$ so that
\todoin{
\be
	-a_u(u, 0) + f_0(u) - \int_0^1 f_0(u)du + b(u) = 0
\ee
}
\noindent then the first component of $\varphi_{k+1}$ will become 
\[
	\Ol u = u +\zeta_{k+1}( v)+  f_{+}(u,v) v^{k+1}
\]
with
\todoin{
\[
\zeta_{k+1}(v)=\zeta_k(v)+[f_0]v^k,  \quad [f_0] = \int_0^1 f_0(u) du.
\]
}
In previous estimate we used conclusions $\zeta'_k(0)=1,\;\zeta''_k(0)=0$, $k\geq3$. On the other hand, 
\ben
& &	\barv + \barv^k b(\baru) +\bar v^{k+1}e(\bar u,\bar v)\\
&=& v + v^k b(u) + v^{k+1}e(u,v)+v^{k+1} g_0(v) + O(v^{k+2}),
\een
because 
\[
b(\baru) - b(u) = b'(u) v + O(v^2),
\]
so we formally get 
\[
	\barv = v + v^{k+1}( - b'(u)+  g_0(u)) + O(v^{k+2}) := v + v^{k+2} g_+(u, v) 
\]
once
\todoin{
\be\label{raise1}
- b'(u)+  g_0(u)=0.
\ee
}
%
\noindent Recall that $a(u,v), b(u)$ have to be chosen to preserve $v dv \wedge du = y dy \wedge dx$. This is available by taking $S(x,v)$ satisfying 
\be
\begin{aligned}
	dS(x, v) &= \frac12 y^2 dx - \frac12 v^2 du + \frac12  d\left( (u-x) v^2 \right)  \\
  &	= \frac12 y^2 dx - \frac12 v^2 dx  + (u-x)v dv  .
\end{aligned}
\ee
That imposes
\be\label{eq:gene-tran}
	y^2 = v^2 + 2 S_x(x, v), \quad u = \frac1{v} S_v(x,v) + x,
\ee
therefore, we have to choose
\[
S_x(x, v) =  v^{k+1} b(x)
\]
 due to (\ref{eq:candi-tran}) and accordingly 
$e(u,v)=O(v^{k-2})$. On the other side, first equation of (\ref{eq:candi-tran}) implies
\[
S_v(u+v^{k-1}a(u,v),v)=-v^ka (u,v).
\]
This equation is always solvable by the {\sf Implicit Function Theorem} for suitably small $v\ll1$. Here we just need to choose 
	\todoin{
	\[
	S(x, v): = v^{k+1} B(x)=-\frac{v^{k+1}}{k+2}\int_0^xf_0(s)-[f_0]ds.
\]
}
\noindent That conversely implies
\begin{equation}\label{eq:b}
\left\{
\begin{aligned}
 b(u)&= \frac{[f_0]-f_0(u)}{k+2},& \\
	a(u, 0) &=  \frac{k+1}{k+2} \int_0^u f_0(s)-[f_0]ds&
	\end{aligned}
	\right.
\end{equation}
The coordinate change $\Phi_k$ is now established by 
\be
\Phi_k:\left\{
\begin{aligned}
	&y = v\sqrt{ 1+ \frac2{k+2} v^{k-1} ([f_0]-f_0(x))},\\
	& u = x + (k+1) v^{k-1} B(x) 
	\end{aligned}
	\right.
\ee
and the new map $\varphi_{k+1}$ becomes
\be
\varphi_{k+1}:\left\{
	\begin{aligned}
	&	\baru = u +\zeta_{k+1}( v)+  v^{k+1}f_+(u,v) \\
	& \barv = v + v^{k+2} g_+(u, v).
	\end{aligned}
	\right.
\ee
Besides, 
\[
d(u,v)=O(v^2),\quad e(u,v)=O(v^{k-2})
\]
are actually higher order irrelevant term in aforementioned computation. 
So we finally establish the expression of $\varphi_{k+1}$ as (\ref{billiard1}).
Moreover, we can find a generating function corresponding to the coordinate transformation:
\[
dG(u,x)=y^2/2dx-v^2/2du
\]
satisfying
\be
G(u,x)&=&(x-u)w+S(x,v)\nonumber\\
&=&\frac{1-k}{2}(k+1)^{-\frac{k+1}{k-1}}B(x)^{-\frac{2}{k-1}}(u-x)^{\frac{k+1}{k-1}}.
\ee
Due to Remark \ref{rmk:sym-inv} we get the 
 generating function $H_+(u,\bar u)$ as (\ref{eq:gene-iter}).
\end{proof}
\vspace{10pt}

\begin{proof} {\it of Theorem \ref{thm:m-t}:}
Starting from $k=3$, now we apply Lemma \ref{lem:iteration} to $\varphi_3$, then get $\varphi_4=\Phi_3\circ\varphi_3\circ\Phi_3^{-1}$ of the form (\ref{billiard1}). Notice that
\[
\Phi_3\circ \cT=\cT\circ\Phi_3
\]
due to (\ref{birk-tran}). Benefit from Lemma \ref{lem:time-rev}, $\varphi_{4}$ can be further improved into
\be
\varphi_5:\left\{
	\begin{aligned}
	& \baru = u + \zeta_4(v)+ O(v^5) \\
	& \barv = v + O(v^6),
	\end{aligned}
	\right.
\ee
so we raise $k$ by 2 actually in one step iteration!!! Notice that the mirror symmetry is preserved for $\varphi_5$ because (\ref{birk-tran}) commutes with $\cT$ for odd $k$, then due to Lemma \ref{lem:pass-sym} $\varphi_5$ is mirror symmetric.
Therefore, the iteration can be continued for infinitely many steps. If we denote by 
\[
\Phi_\infty:=\prod_{\substack{k=3\\k\text{ is odd}}}^{+\infty}\Phi_k
\]
the composition of infinitely many transformations, then $\varphi_\infty=\Phi_\infty\circ\varphi_3\circ\Phi_\infty$ has a normal form as (\ref{eq:birk-norm-form}).

Apparently when $\partial\Om$ is $C^\infty$ (or $C^\om$) smooth, all these $\varphi_k$ and $\Phi_k$ are $C^\infty$ smooth in $\A$ as well. Compositing these for infinitely many times then we get $\varphi_\infty$ is $C^\infty$ smooth with a normal form given by (\ref{eq:tar-thm})  as $y\rightarrow0$. 
\end{proof}

\section{Spectral Invariants and $\beta-$function}\label{s3}


In this section we solve  the $\beta-$function relying on the normal form given in Theorem \ref{thm:m-t}, and explore some related corollaries.

\begin{proof}{\it of Corollary \ref{cor:1}:}
Recall that (\ref{eq:tar-thm}) preserves the symplectic form $ydy\wedge dx$, if we take $l:=y^2/2$, then (\ref{eq:tar-thm}) can be transformed into 
\[
\chi:\A\rightarrow\A,\quad\text{via }\left\{
\begin{aligned}
&x^+=x+\zeta_\infty(\sqrt{2l}),\\
&l^+=l
\end{aligned}
\right.
\]
which is an exactly symplectic twist map. As we know $\chi$ is integrable, so we can slove the `formal Hamiltonian' 
\[
H(l)=\int\zeta_\infty(\sqrt{2l})dl=\frac{2\sqrt 2}{3}l^{3/2}+\sum_{i=1}^{+\infty}\frac{2c_{2i+!}}{2i+3}\sqrt{2^{2i+1}}l^{\frac{2i+3}{2}}
\]
of which $\chi$ equals the time-1 map. Moreover, since $H(l)$ is integrable, we can easily solve the `formal $\alpha-$function' of it, $\alpha(c)=H(c)$, which works as the convex conjugation of $\beta(h)$ \cite{Ma2}. Due to the Legendre transformation, we have
\be
\beta(h)&=&\max_{c\in H^1(\T,\R)}\langle c,h\rangle-\alpha(c)\nonumber\\
&=&\frac{\sqrt2}3c^{3/2}+\sum_{i=1}^{+\infty}\frac{2i+1}{2i+3}c_{2i+1}\sqrt{2^{2i+1}}c^{\frac{2i+3}{2}}\nonumber
\ee
with 
\[
\sqrt{2c}=\zeta_\infty^{-1}(h)=h+\sum_{i=1}^{+\infty} d_{2i+1}h^{2i+1}
\]
formally as $h\ll1$.
\end{proof}

\begin{proof}{\it of Corollary \ref{cor:2}:}
Due to (\ref{generating}), we can solve the first two coefficients in (\ref{beta-formal}) by 
\[
\beta_1=1=\text{length}(\partial\Om),
\]
\[
\beta_3=\frac14\Big(\int_0^1\rho^{-2/3}(s)ds\Big)^3.
\]
By a similar estimate as the {\sf isoperimetric inequality} we get $\beta_3+\pi^2\beta_1\leq 0$, where the equality holds only for $\rho(s)\equiv 2\pi$; That indicates $\partial\Om$ is a circle. 
\end{proof}

\end{document}